\pgfplotsset{compat=1.17}
\newtheorem{theorem}{Theorem}[section]
\newtheorem{lemma}[theorem]{Lemma}
\newtheorem{proposition}[theorem]{Proposition}
\newtheorem{corollary}[theorem]{Corollary}
\theoremstyle{definition}
\newtheorem{definition}[theorem]{Definition}
\newtheorem{example}[theorem]{Example}
\theoremstyle{remark}
\newtheorem{remark}[theorem]{Remark}
\DeclareMathOperator{\Lip}{Lip}
\DeclareMathOperator{\sgn}{sgn}
\title{Logically Contractive Mappings: Fixed Points and Event-Indexed Rates}
\author{Faruk Alpay\thanks{Department of Mathematics, Lightcap Institute. Email: \texttt{alpay@lightcap.ai}} \and Taylan Alpay\thanks{Department of Aerospace Engineering, Turkish Aeronautical Association}}
\date{}
\begin{document}
\maketitle

\begin{abstract}
We introduce \emph{logically contractive mappings}---nonexpansive self-maps that contract along a subsequence of iterates---and prove a fixed-point theorem that extends Banach's principle. We obtain event-indexed convergence rates and, under bounded gaps between events, explicit iteration-count rates. A worked example shows a nonexpansive map whose square is a strict contraction, and we clarify relations to Meir--Keeler and asymptotically nonexpansive mappings. We further generalize to variable-factor events and show that $\prod_k \lambda_k = 0$ (equivalently $\sum_k -\ln \lambda_k = \infty$) implies convergence. These results unify several generalized contraction phenomena and suggest new rate questions tied to event sparsity.
\end{abstract}

\section{Introduction}
Banach's contraction mapping theorem is a cornerstone of metric fixed-point theory: any map $T$ on a complete metric space $(X,d)$ with a uniform Lipschitz constant $L<1$ has a unique fixed point $z\in X$, and moreover $T^n x$ converges to $z$ for every $x\in X$ with an exponential rate $L^n d(x,z)$. Numerous extensions have been studied, weakening the strict contraction requirement while retaining fixed-point existence and convergence guarantees. Classic examples include \emph{Meir--Keeler contractions} \cite{2} (which relax uniform $L<1$ to a certain $\varepsilon$-$\delta$ decay condition) and \emph{asymptotically nonexpansive} maps (which allow the Lipschitz constant of $T^n$ to approach $1$ as $n\to\infty$) \cite{3}.

In this note, we formalize a simple unifying concept: a mapping that is overall nonexpansive ($\Lip(T)\le 1$) but occasionally acts like a contraction. Specifically, a \textit{logically contractive} mapping is one for which there exists some $0<\lambda<1$ and an infinite sequence of iterate counts $n_1 < n_2 < n_3 < \cdots$ such that each $T^{n_k}$ is $\lambda^k$-Lipschitz. Intuitively, $T$ is not required to contract every time step, but it does so at an infinite sequence of ``event'' iterations, eventually forcing convergence. We prove that any logically contractive map has a unique fixed point and that all iterations converge to it (Theorem~\ref{thm:fixedpoint}). In fact, the distance to the fixed point decays at least geometrically along the event subsequence, and if the gap between consecutive events is bounded, one can derive an explicit per-iteration decay rate (Corollary~\ref{cor:rate}). We also compare logically contractive maps with the Meir--Keeler and asymptotically nonexpansive classes, showing that none of these notions subsume the others (Proposition~\ref{prop:incomparability}). A concrete example is provided (Example~\ref{ex:T-square}) of a nonexpansive mapping that is not a strict contraction yet is logically contractive (in fact, its second iterate is constant), illustrating the concept. Finally, we generalize the logical contractiveness idea to allow varying contraction factors at different events. We show that if the product of these factors tends to zero (equivalently, an infinite sum of their logarithmic decreases diverges), then $T^n x$ still converges to the fixed point for any starting point $x$ (Theorem~\ref{thm:vlc}). We conclude with brief remarks on the scope of our results and open questions.

\paragraph{Notation.} Throughout, $(X,d)$ is a complete metric space. For a map $T:X\to X$ and any $n\in \mathbb{N}$, $T^n$ denotes the $n$-fold composition (so $T^1=T$). We write $\Lip(S)$ for the (global) Lipschitz constant of a function $S$, i.e.\ $\Lip(S) := \sup_{x\neq y} \frac{d(Sx,Sy)}{d(x,y)}$. A map is \emph{nonexpansive} if $\Lip(T)\le 1$. Compositions of nonexpansive maps are nonexpansive as well, i.e.\ $\Lip(T^n)\le 1$ for any $n$ if $\Lip(T)\le 1$. We write $\lfloor t\rfloor$ for the floor of $t$ (the greatest integer $\le t$).

\section{Fixed-point theorem for logically contractive mappings}
\begin{definition}[Logically contractive mapping]\label{def:logical-contractiveness}
Let $T:X\to X$ be a self-map on a metric space $(X,d)$. We say $T$ is \emph{logically contractive} if $T$ is nonexpansive and there exist a constant $\lambda \in (0,1)$ and an increasing sequence of natural numbers $n_1<n_2<n_3<\cdots$ such that for all $k\ge 1$,
\[
\Lip\!\big(T^{\,n_k}\big) \;\le\; \lambda^k.
\]
In other words, $T^{\,n_k}$ is a strict contraction, with Lipschitz constant at most $\lambda^k$, at an infinite sequence of iterate indices $n_k$.
\end{definition}

By definition, a logically contractive map $T$ has at least one ``event'' (the case $k=1$) where a strict contraction occurs. In particular, $\Lip(T^{\,n_1}) \le \lambda < 1$, so $T^{\,n_1}$ is itself a Banach contraction on $X$. Therefore $T^{\,n_1}$ has a unique fixed point in $X$, which we will denote $z$:
\[
T^{\,n_1}(z) = z.
\]

\begin{theorem}\label{thm:fixedpoint}
Let $T:X\to X$ be a logically contractive mapping. Then:
\begin{enumerate}
\item $T$ has a unique fixed point $z\in X$.
\item For every $x\in X$, the sequence $T^n x$ converges to $z$ as $n\to\infty$.
\item Moreover, for every $x\in X$ and all $k\ge1$,
\[
d\!\big(T^{\,n_k}x,\;z\big) \;\le\; \lambda^k\, d(x,z).
\]
\end{enumerate}
\end{theorem}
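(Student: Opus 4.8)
The plan is to derive all three claims from the single contraction $T^{n_1}$ together with nonexpansiveness of $T$, treating the claims in the order 1, 3, 2, since part 3 feeds directly into part 2. For part 1, the fixed point $z$ of $T^{n_1}$ is already in hand from the Banach principle (as noted before the statement). To upgrade it to a fixed point of $T$ itself, I would exploit that $T$ commutes with its own powers: $T^{\,n_1}(Tz) = T(T^{\,n_1}z) = Tz$, so $Tz$ is again a fixed point of the contraction $T^{\,n_1}$, and uniqueness of that fixed point forces $Tz = z$. For uniqueness of the fixed point of $T$, note that any $w$ with $Tw=w$ satisfies $T^{\,n_1}w = w$, so $w$ is a fixed point of $T^{\,n_1}$ and hence equals $z$.

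For part 3, observe that $Tz = z$ iterates to $T^{\,n_k}z = z$ for every $k$, so the Lipschitz bound $\Lip(T^{\,n_k}) \le \lambda^k$ gives
\[
d\!\big(T^{\,n_k}x, z\big) \;=\; d\!\big(T^{\,n_k}x,\, T^{\,n_k}z\big) \;\le\; \lambda^k\, d(x,z),
\]
which is immediate once $z$ is known to be fixed.

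Part 2 is where the only genuine subtlety lies: part 3 yields convergence merely along the (potentially sparse) subsequence $(n_k)$, and subsequential convergence does not by itself control the full orbit. The device I would use is to show that the error sequence $a_n := d(T^n x, z)$ is monotone. Since $z$ is fixed and $\Lip(T) \le 1$,
\[
a_{n+1} \;=\; d\!\big(T(T^n x),\, Tz\big) \;\le\; d(T^n x, z) \;=\; a_n,
\]
so $(a_n)$ is nonincreasing and bounded below by $0$, hence convergent. A convergent sequence shares its limit with every subsequence, and part 3 exhibits the subsequence $a_{n_k} \le \lambda^k d(x,z) \to 0$; therefore $a_n \to 0$, i.e.\ $T^n x \to z$. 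The crux is precisely this monotonicity step, and it is the only place where nonexpansiveness of $T$ (as opposed to the isolated contraction at index $n_1$) is actually used.
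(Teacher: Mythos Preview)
Your proof is correct and follows essentially the same route as the paper: parts (1) and (3) are identical, and for part (2) both arguments use nonexpansiveness to control the error between event times---you package this as monotonicity of $a_n:=d(T^n x,z)$ and invoke ``convergent subsequence of a monotone sequence determines the limit,'' while the paper bounds $d(T^n x,z)\le d(T^{\,n_k}x,z)$ directly for $n_k\le n<n_{k+1}$, which is the same observation written locally.
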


\begin{proof}
\emph{(1) Existence/uniqueness.}
Since $\Lip(T^{\,n_1})\le \lambda<1$, $T^{\,n_1}$ is a Banach contraction and hence has a unique fixed point $z\in X$. Now $T^{\,n_1}(Tz)=T(T^{\,n_1}z)=Tz$, so by uniqueness of the fixed point of $T^{\,n_1}$ we have $Tz=z$. If $Tx=x$ for some $x$, then $T^{\,n_1}x=x$, whence $x=z$ by uniqueness.

\emph{(2) Convergence of the full sequence.}
By (3) below, $d(T^{\,n_k}x,z)\le \lambda^k d(x,z)\to 0$ as $k\to\infty$. If $n_k\le n < n_{k+1}$, then by nonexpansiveness of $T^{\,n-n_k}$,
\[
d(T^n x,z) \;=\; d\!\big(T^{\,n-n_k}(T^{\,n_k}x),\;T^{\,n-n_k}(z)\big)\;\le\; d(T^{\,n_k}x,z).
\]
Hence $d(T^n x,z)\to 0$ as $n\to\infty$.

\emph{(3) Eventwise estimate.}
Since $T^{\,n_k}z=z$ and $\Lip(T^{\,n_k})\le \lambda^k$,
\[
d(T^{\,n_k}x,z)\;=\; d\!\big(T^{\,n_k}x,\;T^{\,n_k}z\big)\;\le\; \lambda^k\, d(x,z).
\]
\end{proof}

\begin{remark}[Role of completeness]
Completeness of $(X,d)$ is used only to invoke Banach's theorem for the strict contraction $T^{\,n_1}$; all subsequent steps rely on nonexpansiveness and the eventwise Lipschitz bounds.
\end{remark}

From part (3) we obtain a convergence rate in terms of the event count $k$. To express a per-iteration rate, we need information on the spacing of events.

\begin{corollary}[Iteration-count rate under bounded event gaps]\label{cor:rate}
Assume $T$ is logically contractive with constant $\lambda\in(0,1)$ and let $z$ be the fixed point of $T$. If there is a uniform bound $M$ on event gaps (i.e.\ $n_{k+1}-n_k\le M$ for all $k$), then for every $x\in X$ and every $n\ge n_1$,
\[
d\!\big(T^n x,\;z\big) \;\le\; \lambda^{\,1+\left\lfloor \frac{\,n-n_1\,}{M}\right\rfloor} \, d(x,z).
\]
\end{corollary}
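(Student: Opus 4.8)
The plan is to pin down, for each iteration count $n \ge n_1$, the most recent ``event'' index occurring at or before $n$, and then to combine the eventwise geometric estimate from Theorem~\ref{thm:fixedpoint}(3) with the nonexpansive monotonicity already exploited in the proof of part~(2). The bounded-gap hypothesis $n_{k+1}-n_k \le M$ enters only at the very end, as a purely combinatorial device for converting a bound in the event count $k$ into a bound in the iteration count $n$.

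Concretely, I would first fix $n \ge n_1$ and set $k^* := \max\{k \ge 1 : n_k \le n\}$. This maximum exists and is finite because $n_1 \le n$ (so the set is nonempty) while $n_k \to \infty$ (so the set is bounded). By maximality we have $n_{k^*} \le n < n_{k^*+1}$. Writing $T^n = T^{\,n-n_{k^*}} \circ T^{\,n_{k^*}}$ with $n - n_{k^*} \ge 0$, and using that $T^{\,n-n_{k^*}}$ is nonexpansive together with $T^{\,n_{k^*}}z = z$, I obtain
\[
d(T^n x, z) = d\!\big(T^{\,n-n_{k^*}}(T^{\,n_{k^*}}x),\, T^{\,n-n_{k^*}}(z)\big) \le d(T^{\,n_{k^*}}x, z) \le \lambda^{k^*}\, d(x,z),
\]
where the final inequality is exactly Theorem~\ref{thm:fixedpoint}(3) applied at the event $k^*$.

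It then remains to lower-bound $k^*$ in terms of $n$, which is the one step requiring care. Telescoping the gap hypothesis gives $n_{k^*+1} = n_1 + \sum_{j=1}^{k^*}(n_{j+1}-n_j) \le n_1 + k^* M$. Combining this with the strict inequality $n < n_{k^*+1}$ yields $n - n_1 < k^* M$, i.e.\ $k^* > (n-n_1)/M$. Since $k^*$ is an integer strictly exceeding $(n-n_1)/M$, it must satisfy $k^* \ge 1 + \big\lfloor (n-n_1)/M \big\rfloor$. Finally, because $\lambda \in (0,1)$ the function $j \mapsto \lambda^j$ is decreasing, so $\lambda^{k^*} \le \lambda^{\,1 + \lfloor (n-n_1)/M\rfloor}$, and chaining this with the displayed estimate completes the bound.

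I expect the main obstacle to be bookkeeping in this last counting step rather than anything analytic: one must keep the strict versus non-strict inequalities consistent (the strictness $n < n_{k^*+1}$ is what forces the integer $k^*$ to clear the floor and gain the extra $+1$ in the exponent) and must confirm the telescoping count uses exactly $k^*$ gaps. Everything metric---the monotonicity under nonexpansiveness and the eventwise contraction factor---is inherited directly from Theorem~\ref{thm:fixedpoint}, so no new fixed-point machinery is needed.
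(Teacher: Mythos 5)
Your proof is correct and takes essentially the same route as the paper: both combine the eventwise estimate of Theorem~\ref{thm:fixedpoint}(3) with nonexpansiveness of intermediate iterates and the telescoped gap bound $n_{k+1}\le n_k+M$. The only cosmetic difference is that the paper applies the estimate directly at event index $m+1$ with $m=\lfloor (n-n_1)/M\rfloor$ (showing $n_{m+1}\le n$ by induction), whereas you apply it at the latest event $k^*$ with $n_{k^*}\le n$ and then lower-bound $k^*\ge 1+\lfloor (n-n_1)/M\rfloor$; these are mirror images of the same counting argument.
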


\begin{proof}
Let $m:=\left\lfloor \frac{n-n_1}{M}\right\rfloor$. Then $n_1+mM\le n < n_1+(m+1)M$. 
By induction from $n_{k+1}\le n_k+M$ we have $n_{m+1}\le n_1+mM$, hence $n_{m+1}\le n$. 
By Theorem~\ref{thm:fixedpoint}(3) and nonexpansiveness,
\[
d(T^n x,z)\;\le\; d(T^{\,n_{m+1}}x,z)\;\le\; \lambda^{\,m+1} d(x,z).
\]
\end{proof}

We now show that a bounded-gap schedule can always be enforced by using the first strict event repeatedly.

\begin{lemma}[Bounded gaps without loss of generality]\label{lem:bounded-gaps}
Let $T$ be logically contractive with constant $\lambda$ and first event time $n_1$. Then for every integer $m\ge 1$,
\[
\Lip\!\big(T^{\,m n_1}\big)\;=\;\Lip\!\big((T^{\,n_1})^m\big)\;\le\;\big(\Lip(T^{\,n_1})\big)^m \;\le\; \lambda^m.
\]
Thus the \emph{canonical} schedule $\tilde n_m:= m n_1$ certifies logical contractiveness with constant gap $\tilde n_{m+1}-\tilde n_m = n_1$.
\end{lemma}

\begin{proof}
The identity $(T^{\,n_1})^m=T^{\,m n_1}$ is equality of maps. Submultiplicativity under composition gives
\[
\Lip\!\big((T^{\,n_1})^m\big)\;\le\;\big(\Lip(T^{\,n_1})\big)^m,
\]
and equality need not hold in general (indeed, strict inequality is typical).
\end{proof}

\begin{corollary}[Explicit per-iterate rate]\label{cor:rate-nogap}
For any logically contractive $T$ with first event time $n_1$ and fixed point $z$, we have for all $x\in X$ and all $n\ge 0$,
\[
d\!\big(T^n x,\;z\big) \;\le\; \big(\Lip(T^{\,n_1})\big)^{\left\lfloor \frac{n}{\,n_1\,}\right\rfloor} \, d(x,z)
\;\le\; \lambda^{\left\lfloor \frac{n}{\,n_1\,}\right\rfloor} \, d(x,z).
\]
\end{corollary}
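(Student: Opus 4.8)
The plan is to prove Corollary~\ref{cor:rate-nogap} directly by combining the submultiplicativity bound of Lemma~\ref{lem:bounded-gaps} with the nonexpansiveness of $T$, working with the canonical schedule $\tilde n_m = m n_1$ rather than the originally given events $n_k$. First I would fix $x\in X$ and $n\ge 0$ and set $m:=\left\lfloor \tfrac{n}{n_1}\right\rfloor$, so that $m n_1 \le n < (m+1)n_1$. Writing $r := n - m n_1 \in \{0,1,\dots,n_1-1\}$, I would factor the iterate as $T^n x = T^{\,r}\big(T^{\,m n_1} x\big)$, separating a ``clean'' multiple-of-$n_1$ part from a short nonexpansive tail.

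Next I would peel off the tail $T^{\,r}$. Since $z$ is the fixed point of $T$ by Theorem~\ref{thm:fixedpoint}(1), it is fixed by every power, in particular $T^{\,r}z=z$; hence by $\Lip(T^{\,r})\le 1$,
\[
d\!\big(T^n x,\,z\big)\;=\;d\!\big(T^{\,r}(T^{\,m n_1}x),\,T^{\,r}z\big)\;\le\;d\!\big(T^{\,m n_1}x,\,z\big).
\]
This reduces the estimate to a pure multiple-of-$n_1$ iterate, to which the canonical schedule applies.

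Then I would invoke the eventwise contraction. Using $T^{\,m n_1}z=z$ together with the bound $\Lip(T^{\,m n_1})\le\big(\Lip(T^{\,n_1})\big)^m$ from Lemma~\ref{lem:bounded-gaps},
\[
d\!\big(T^{\,m n_1}x,\,z\big)\;=\;d\!\big(T^{\,m n_1}x,\,T^{\,m n_1}z\big)\;\le\;\Lip(T^{\,m n_1})\,d(x,z)\;\le\;\big(\Lip(T^{\,n_1})\big)^m d(x,z).
\]
The second claimed inequality then follows from $\Lip(T^{\,n_1})\le\lambda$, since raising a number in $[0,1)$ to the power $m$ preserves this order, giving $\big(\Lip(T^{\,n_1})\big)^m\le\lambda^m=\lambda^{\lfloor n/n_1\rfloor}$.

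The argument is essentially bookkeeping and presents no real obstacle; the only points requiring care are the degenerate case $m=0$ (where $n<n_1$, every factor equals $1$, and the bound collapses to the trivial nonexpansive estimate $d(T^n x,z)\le d(x,z)$) and the observation that the canonical schedule removes the bounded-gap hypothesis of Corollary~\ref{cor:rate} entirely, as the gaps $\tilde n_{m+1}-\tilde n_m=n_1$ are constant by construction. One could alternatively derive the coarser $\lambda$-bound by applying Corollary~\ref{cor:rate} with $M=n_1$ and verifying the index identity $1+\left\lfloor\tfrac{n-n_1}{n_1}\right\rfloor=\left\lfloor\tfrac{n}{n_1}\right\rfloor$ for $n\ge n_1$, but the direct route above has the advantage of covering all $n\ge 0$ at once and of yielding the sharper leading factor $\big(\Lip(T^{\,n_1})\big)^{\lfloor n/n_1\rfloor}$, which the $\lambda$-only statement of Corollary~\ref{cor:rate} does not capture.
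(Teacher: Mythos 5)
Your proof is correct, and it takes a more self-contained route than the paper's. The paper's own proof is a one-line citation: apply Corollary~\ref{cor:rate} to the canonical schedule $\tilde n_m = m n_1$ of Lemma~\ref{lem:bounded-gaps} with gap $M=n_1$. That citation leaves three things implicit: the index identity $1+\left\lfloor \frac{n-n_1}{n_1}\right\rfloor=\left\lfloor \frac{n}{n_1}\right\rfloor$ (valid only for $n\ge n_1$); the range $0\le n<n_1$, which Corollary~\ref{cor:rate} does not cover but where the claim is the trivial nonexpansive bound $d(T^nx,z)\le d(x,z)$; and the justification of the sharper leading factor $\Lip(T^{\,n_1})$, which requires rerunning Corollary~\ref{cor:rate} with the constant $\mu:=\Lip(T^{\,n_1})$ in place of $\lambda$, legitimate because Lemma~\ref{lem:bounded-gaps} certifies the canonical schedule with that constant. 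Your direct decomposition $T^n=T^{\,r}\circ (T^{\,n_1})^m$ with $m=\lfloor n/n_1\rfloor$, $r=n-mn_1$ settles all three points at once: the nonexpansive tail $T^{\,r}$ absorbs the remainder, submultiplicativity gives $\Lip(T^{\,mn_1})\le\big(\Lip(T^{\,n_1})\big)^m$, and $m=0$ degenerates gracefully. It also quietly sidesteps an edge case the citation route stumbles on: when $\Lip(T^{\,n_1})=0$ (as in Example~\ref{ex:T-square}, where $T^{\,n_1}$ is constant), one cannot literally invoke Corollary~\ref{cor:rate} with constant $\mu=0$, since Definition~\ref{def:logical-contractiveness} requires $\lambda\in(0,1)$, whereas your argument needs no such restriction. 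What the paper's route buys is brevity and reuse of an already-proved statement; what yours buys is a proof of the corollary exactly as stated, for all $n\ge 0$ and with the sharper constant, with no bridging left to the reader.
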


\begin{proof}
Apply Corollary~\ref{cor:rate} to the canonical schedule $\tilde n_m= m n_1$ with gap $M=n_1$.
\end{proof}

\begin{remark}
The original schedule $\{n_k\}$ may be irregular; the canonical schedule in Lemma~\ref{lem:bounded-gaps} produces a clean rate. If later events have much smaller Lipschitz constants than $\lambda$, one can restart with a smaller contraction constant to sharpen the bound.
\end{remark}

\begin{remark}[Equivalent formulation]\label{rem:equiv}
A mapping $T$ is logically contractive if and only if it is nonexpansive and there exists at least one iterate $T^N$ that is a strict contraction (i.e.\ $\Lip(T^N)<1$). Indeed, the ``only if'' part is $k=1$ of Definition~\ref{def:logical-contractiveness}. Conversely, if $\Lip(T^N)=\mu<1$, then by Lemma~\ref{lem:bounded-gaps} the schedule $n_k=kN$ satisfies $\Lip(T^{n_k})\le \mu^k$, which is logically contractive with $\lambda=\mu$.
\end{remark}

\begin{example}[Non-\texorpdfstring{$\mathbb{R}$}{R} setting: $\ell_\infty$]\label{ex:linfty}
Let $X=\ell_\infty(\mathbb{N})$ with the sup norm, and define $t:\mathbb{R}\to\mathbb{R}$ by
\[
t(u)=
\begin{cases}
0, & |u|\le 1,\\
u-\sgn(u), & 1<|u|<2,\\
\sgn(u), & |u|\ge 2~.
\end{cases}
\]
Define $T:X\to X$ coordinatewise by $(Tx)_i:=t(x_i)$. Since $t$ is $1$-Lipschitz on $\mathbb{R}$, we have
$\|Tx-Ty\|_\infty=\sup_i |t(x_i)-t(y_i)|\le \sup_i |x_i-y_i|=\|x-y\|_\infty$, so $T$ is nonexpansive. Moreover, $t^2\equiv 0$ implies $T^2\equiv 0$. Hence $T$ is logically contractive with $n_1=2$ and unique fixed point $0\in X$.
\end{example}

\section{Comparisons with other contraction conditions}
A mapping $T$ is a \emph{Meir--Keeler contraction} \cite{2} if for every $\varepsilon>0$ there exists $\delta>0$ such that
\[
d(x,y)\in [\varepsilon,\varepsilon+\delta) \;\Rightarrow\; d(Tx,Ty) < \varepsilon.
\]
Following Goebel--Kirk \cite{3}, $T$ is \emph{asymptotically nonexpansive} if there exist $k_n\ge 1$ with $k_n\to 1$ and
\[
d(T^n x,\,T^n y)\;\le\; k_n\, d(x,y)\qquad(\forall\,x,y\in X,\ \forall\,n\in\mathbb{N}).
\]
(In particular, this implies $\limsup_{n\to\infty}\Lip(T^n)\le 1$; some authors require $\Lip(T^n)\to 1$ as a definition.)

\begin{proposition}\label{prop:incomparability}
Logical contractiveness is incomparable with Meir--Keeler and with asymptotically nonexpansive mappings:
\begin{enumerate}
\item There exists a logically contractive map that is not Meir--Keeler.
\item There exists a Meir--Keeler contraction that is not logically contractive.
\item There exists an asymptotically nonexpansive map that is not logically contractive.
\end{enumerate}
Moreover, under the Goebel--Kirk definition, every nonexpansive map (hence every logically contractive map) is trivially asymptotically nonexpansive by taking $k_n\equiv 1$.
\end{proposition}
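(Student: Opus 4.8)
The plan is to prove the three non-implications by exhibiting explicit witnesses and to dispatch the closing ``trivial containment'' sentence directly from the definitions. The organizing observation I would record first is that a Meir--Keeler map is necessarily \emph{strictly contractive}: for any $x\neq y$, taking $\varepsilon=d(x,y)$ makes the membership $d(x,y)\in[\varepsilon,\varepsilon+\delta)$ automatic, so the defining implication forces $d(Tx,Ty)<\varepsilon=d(x,y)$. Hence any map that preserves even one positive distance cannot be Meir--Keeler, and this is the lever for part (1). For part (1) itself I would reuse the coordinate map $t$ of Example~\ref{ex:linfty} (or its lift $T$), which is logically contractive since $t^2\equiv 0$. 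On the interval $(1,2)$ one has $t(u)=u-\sgn(u)=u-1$, so $t$ acts there as an isometry; choosing $x\neq y$ in $(1,2)$ gives $|t(x)-t(y)|=|x-y|$, and by the observation above $t$ (hence $T$) is not Meir--Keeler.

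For part (2) I would invoke the equivalent formulation of Remark~\ref{rem:equiv}: a nonexpansive map fails to be logically contractive exactly when no iterate is a strict contraction, i.e.\ $\Lip(T^n)=1$ for all $n$. I would take $X=[0,\infty)$ and $T(x)=\ln(1+x)$, so that $T(0)=0$, $T'(x)=1/(1+x)\in(0,1]$, and $T'(0)=1$. Nonexpansiveness is immediate from $|T'|\le 1$, while the chain rule gives $(T^n)'(0)=(T'(0))^n=1$, so $\Lip(T^n)\ge |(T^n)'(0)|=1$ and therefore $\Lip(T^n)=1$ for every $n$; thus $T$ is Meir--Keeler candidate that is not logically contractive. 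The remaining task is to verify the Meir--Keeler condition: since $T'$ is decreasing, for a fixed gap $s$ the increment $T(x+s)-T(x)$ is maximized at $x=0$, which reduces the uniform $\varepsilon$-$\delta$ test to the single inequality $\ln(1+s)<\varepsilon$ for $s\in[\varepsilon,\varepsilon+\delta)$, valid for small $\delta$ because $\ln(1+\varepsilon)<\varepsilon$.

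For part (3) an isometry suffices: the translation $S(x)=x+1$ on $\mathbb{R}$ satisfies $d(S^nx,S^ny)=d(x,y)$, so it is asymptotically nonexpansive with $k_n\equiv 1$, yet it has no fixed point and hence, by Theorem~\ref{thm:fixedpoint}, cannot be logically contractive. The closing sentence is then immediate: if $\Lip(T)\le 1$ then $\Lip(T^n)\le 1$ for all $n$, so $d(T^nx,T^ny)\le d(x,y)=k_n\,d(x,y)$ with $k_n\equiv 1$, exhibiting every nonexpansive map (a fortiori every logically contractive one) as asymptotically nonexpansive in the Goebel--Kirk sense.

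The main obstacle I anticipate is the Meir--Keeler verification in part (2): pointwise strict contractivity of $T(x)=\ln(1+x)$ is easy, but the Meir--Keeler condition demands a \emph{uniform} $\delta$ for each $\varepsilon$. I must therefore argue carefully that the extremal pair sits at the fixed point $0$ (using monotonicity of $T'$) and quantify the slack $\varepsilon-\ln(1+\varepsilon)>0$; the other parts reduce to one-line consequences of the observations above and of Theorem~\ref{thm:fixedpoint}.
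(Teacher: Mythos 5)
Your proposal is correct, and all three witnesses do the job, but you take a genuinely different route from the paper in parts (2) and (3). In part (1) you use the same map as the paper (the map $t$ of Example~\ref{ex:linfty}, which is exactly the paper's piecewise map), but you organize the non-Meir--Keeler argument around a reusable lemma---every Meir--Keeler map satisfies $d(Tx,Ty)<d(x,y)$ for $x\neq y$ (take $\varepsilon=d(x,y)$)---and then exhibit an isometric pair in $(1,2)$; the paper instead plugs in $x=1$, $y=1+\varepsilon$ directly. Your lemma is a nice portable criterion, though both arguments are essentially one line. In part (2) the paper uses $T(x)=x-c(x-\tfrac12)^3$ on $[0,1]$ and verifies the Meir--Keeler condition by an integral estimate (minimizing $\int_x^y(t-\tfrac12)^2\,dt$ over intervals of fixed length), whereas you use $T(x)=\ln(1+x)$ on $[0,\infty)$: the bound $d(Tx,Ty)=\ln\bigl(1+\tfrac{d(x,y)}{1+\min(x,y)}\bigr)\le\ln(1+d(x,y))$ follows from monotonicity of $T'$, and the uniform $\delta$ can be made explicit as $\delta(\varepsilon)=e^{\varepsilon}-1-\varepsilon>0$, since then $s<\varepsilon+\delta$ gives $\ln(1+s)<\varepsilon$; this is more elementary than the paper's computation, and your non-LC argument ($(T^n)'(0)=1$, so $\Lip(T^n)=1$ by Remark~\ref{rem:equiv}) mirrors the paper's use of $(T^n)'(\tfrac12)=1$. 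In part (3) the paper takes the identity map and checks $\Lip(T^n)=1$; you take the translation $S(x)=x+1$, which is asymptotically nonexpansive with $k_n\equiv 1$, and rule out logical contractiveness by the contrapositive of Theorem~\ref{thm:fixedpoint} (no fixed point exists). Your example makes a slightly stronger point---an asymptotically nonexpansive map can fail to be logically contractive because it has no fixed point at all, not merely because no iterate contracts---while the paper's identity example shows failure even when fixed points abound. The closing containment is handled identically in both.
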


\begin{proof}
\textbf{(1) LC $\not\Rightarrow$ MK.} Define $T:\mathbb{R}\to\mathbb{R}$ by
\[
T(x)=
\begin{cases}
0, & |x|\le 1,\\[3pt]
x-\sgn(x), & 1<|x|<2,\\[3pt]
\sgn(x), & |x|\ge 2~.
\end{cases}
\]
\emph{Nonexpansiveness.} Let $-2,-1,1,2$ be the breakpoints. For $x<y$, refine $\{x,y\}$ by inserting any of these that fall in $[x,y]$, producing $x=t_0<t_1<\dots<t_m=y$. On each $[t_{j-1},t_j]$, $T$ is either constant (Lipschitz $0$) or affine with slope $1$, hence $|T(t_j)-T(t_{j-1})|\le t_j-t_{j-1}$. Summing,
\[
|T(y)-T(x)| \le \sum_{j=1}^m |T(t_j)-T(t_{j-1})|
\le \sum_{j=1}^m (t_j-t_{j-1}) = y-x,
\]
so $\Lip(T)\le 1$. By symmetry the same holds for $y<x$.

\emph{Finite-time convergence.} If $|x|\le 1$ then $T(x)=0$; if $1<|x|<2$ then $T(x)=x-\sgn(x)\in(-1,1)$ so $T^2(x)=0$; if $|x|\ge 2$ then $T(x)=\sgn(x)\in\{-1,1\}$ and $T^2(x)=0$. Thus $T^2\equiv 0$.

\emph{Not Meir--Keeler.} Fix any $\varepsilon\in(0,1]$ and set $x=1$, $y=1+\varepsilon$. Then $d(x,y)=\varepsilon$, but $Tx=0$ and $Ty=\varepsilon$, so $d(Tx,Ty)=\varepsilon\not<\varepsilon$, violating the MK condition for every $\delta>0$.

\smallskip
\textbf{(2) MK $\not\Rightarrow$ LC.} On $X=[0,1]$, let $T(x)=x-c\!\left(x-\tfrac12\right)^3$ with any $c\in (0,4/3]$. Then $T$ is increasing and $C^1$ with
\[
T'(x)=1-3c\!\left(x-\tfrac12\right)^2 \in [\,1- \tfrac{3c}{4},\,1\,]\subseteq[0,1],
\]
so $T$ is nonexpansive. We show $T$ is Meir--Keeler with an explicit $\delta(\varepsilon)$. For $0<\varepsilon\le 1$ choose $\delta:=\tfrac{c}{8}\varepsilon^3$. If $x<y$ and $\varepsilon\le y-x<\varepsilon+\delta$, then by the fundamental theorem of calculus,
\[
T(y)-T(x)=\int_x^y \Big(1-3c(t-\tfrac12)^2\Big)\,dt
= (y-x) - 3c\!\int_x^y (t-\tfrac12)^2\,dt.
\]
Among all intervals of length $h:=y-x$, the integral is minimized when the interval is centered at $\tfrac12$, giving $\int_x^y (t-\tfrac12)^2 dt \ge h^3/12$. Thus
\[
T(y)-T(x) \le h - \frac{c}{4}h^3 \le (\varepsilon+\delta)-\frac{c}{4}\varepsilon^3
= \varepsilon - \frac{c}{8}\varepsilon^3 < \varepsilon.
\]
Since $T$ is increasing, $|T(y)-T(x)|=T(y)-T(x)<\varepsilon$. Therefore $T$ is Meir--Keeler. Finally, $(T^n)'(\tfrac12)=1$ for all $n$, so $\Lip(T^n)\ge 1$ and $T$ is not logically contractive.

\smallskip
\textbf{(3) ANE $\not\Rightarrow$ LC.} For $T=\mathrm{Id}_X$, set $k_n:=1+\tfrac1n$. Then $d(T^n x,T^n y)=d(x,y)\le k_n d(x,y)$ with $k_n\to 1$, so $T$ is asymptotically nonexpansive. But $\Lip(T^n)=1$ for all $n$, hence $T$ is not logically contractive.
\end{proof}

\begin{example}[Illustration of Corollary~\ref{cor:rate-nogap}]\label{ex:T-square}
For the map $T$ in Proposition~\ref{prop:incomparability}(1), we have $T^2\equiv 0$. Thus for any $x$ and any $n\ge 2$,
\[
d(T^n x,\,0)\;=\;0\;\le\; \lambda^{\lfloor n/2\rfloor} |x| \qquad\text{for any fixed }\lambda\in(0,1),
\]
in agreement with Corollary~\ref{cor:rate-nogap} using the canonical schedule with $n_1=2$.
\end{example}

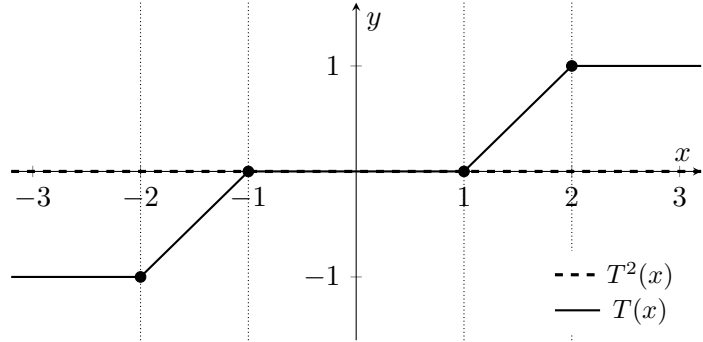
\begin{figure}[t]
\centering
\begin{tikzpicture}
\begin{axis}[
    axis lines=middle,
    xlabel={$x$},
    ylabel={$y$},
    xmin=-3.2, xmax=3.2,
    ymin=-1.6, ymax=1.6,
    xtick={-3,-2,-1,0,1,2,3},
    ytick={-1,0,1},
    width=0.85\linewidth,
    height=0.48\linewidth,
    legend style={draw=none, font=\small, at={(0.98,0.02)}, anchor=south east},
    clip=false
]
\addplot[dashed, very thick, domain=-3.2:3.2, samples=2] {0};
\addlegendentry{$T^{2}(x)$}

\addplot[thick, domain=-3.2:-2, samples=2] {-1};
\addplot[thick, domain=-2:-1, samples=2] {x + 1};
\addplot[thick, domain=-1:1, samples=2] {0};
\addplot[thick, domain=1:2, samples=2] {x - 1};
\addplot[thick, domain=2:3.2, samples=2] {1};
\addlegendentry{$T(x)$}

\addplot[only marks, mark=*, mark size=2pt] coordinates {(-2,-1) (-1,0) (1,0) (2,1)};

\addplot[densely dotted] coordinates {(-2,-1.6) (-2,1.6)};
\addplot[densely dotted] coordinates {(-1,-1.6) (-1,1.6)};
\addplot[densely dotted] coordinates {(1,-1.6) (1,1.6)};
\addplot[densely dotted] coordinates {(2,-1.6) (2,1.6)};
\end{axis}
\end{tikzpicture}
\caption{Piecewise map $T$ used in Proposition~\ref{prop:incomparability}\,(1): $T(x)=0$ for $|x|\le 1$; $T(x)=x-\sgn(x)$ for $1<|x|<2$; $T(x)=\sgn(x)$ for $|x|\ge 2$. The iterate $T^2$ is identically $0$ (dashed).}
\label{fig:example}
\end{figure}

\section{Variable-factor generalization and convergence criterion}
\begin{definition}[Variable-factor logically contractive (VLC)]\label{def:vlc}
A nonexpansive map $T:X\to X$ is \emph{variable-factor logically contractive} if there exist an increasing sequence $n_1<n_2<\cdots$ and factors $\lambda_k\in(0,1]$ such that for all $k\ge 1$ and all $x,y\in X$,
\[
d\!\big(T^{\,n_k}x,\;T^{\,n_k}y\big)\;\le\;\Big(\prod_{i=1}^{k}\lambda_i\Big)\, d(x,y).
\]
Equivalently, with $\Lambda_k:=\prod_{i=1}^{k}\lambda_i$,
\[
d\!\big(T^{\,n_k}x,\;T^{\,n_k}y\big)\;\le\;\lambda_k\, d\!\big(T^{\,n_{k-1}}x,\;T^{\,n_{k-1}}y\big)\quad(k\ge 2),\qquad
d\!\big(T^{\,n_1}x,\;T^{\,n_1}y\big)\;\le\;\lambda_1 d(x,y).
\]
\end{definition}

\noindent\textbf{Fixed point for VLC.}
Let $k_\ast:=\min\{k\ge 1:\Lambda_k<1\}$ (the first strict event). Then $T^{\,n_{k_\ast}}$ is a Banach contraction with a unique fixed point $z$. The same argument as in Theorem~\ref{thm:fixedpoint}(1) yields $Tz=z$ since
\[
T^{\,n_{k_\ast}}(Tz)=T(T^{\,n_{k_\ast}}z)=Tz.
\]

\begin{theorem}\label{thm:vlc}
Let $T$ be VLC with factors $(\lambda_k)$, events $(n_k)$, and fixed point $z$ defined via $k_\ast$. Then:
\begin{enumerate}
\item For all $x\in X$ and $k\ge 1$,
\[
d\!\big(T^{\,n_k}x,\;z\big)\;\le\;\Lambda_k\, d(x,z).
\]
\item If $\Lambda_k\to 0$ (equivalently $\prod_{k=1}^\infty \lambda_k=0$, or $\sum_{k=1}^\infty -\ln\lambda_k=\infty$), then $T^n x\to z$ for every $x\in X$.
\item If $\sup_k (n_{k+1}-n_k)\le M<\infty$, then for all $n\ge n_1$,
\[
d\!\big(T^{\,n}x,\;z\big)\;\le\;\Lambda_{\,1+\left\lfloor \frac{\,n-n_1\,}{M}\right\rfloor}\, d(x,z).
\]
\end{enumerate}
\end{theorem}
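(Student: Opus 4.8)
The plan is to prove all three parts by reducing to the constant-factor template of Theorem~\ref{thm:fixedpoint} and Corollary~\ref{cor:rate}, with the scalar $\lambda^{k}$ systematically replaced by the partial product $\Lambda_k=\prod_{i=1}^{k}\lambda_i$. The single structural fact I would record at the outset is that the fixed point $z$ supplied just before the theorem satisfies $T^{m}z=z$ for every $m\in\mathbb{N}$ (by induction on $m$ from $Tz=z$), so in particular $T^{\,n_k}z=z$ for all $k$.

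For Part (1) I would substitute $y=z$ into the defining VLC inequality. Since $T^{\,n_k}z=z$, this reads $d(T^{\,n_k}x,z)=d(T^{\,n_k}x,T^{\,n_k}z)\le\Lambda_k\,d(x,z)$, which is exactly the claim and holds for every $k\ge1$; for indices $k<k_\ast$ one has $\Lambda_k=1$, so the bound there degenerates to nonexpansiveness. Part (3) then follows verbatim from the argument of Corollary~\ref{cor:rate}: setting $m:=\lfloor(n-n_1)/M\rfloor$ and using $n_{k+1}\le n_k+M$ to induct to $n_{m+1}\le n_1+mM\le n$, nonexpansiveness of $T^{\,n-n_{m+1}}$ combined with Part (1) gives $d(T^{n}x,z)\le d(T^{\,n_{m+1}}x,z)\le\Lambda_{m+1}\,d(x,z)$, and $m+1=1+\lfloor(n-n_1)/M\rfloor$ is the advertised subscript.

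Part (2) splits into an interpolation step and an analytic equivalence. For the interpolation, given $n$ I would take the largest event index $k(n)$ with $n_{k(n)}\le n$ and write, using nonexpansiveness of $T^{\,n-n_{k(n)}}$ together with $T^{\,n-n_{k(n)}}z=z$, the estimate $d(T^{n}x,z)\le d(T^{\,n_{k(n)}}x,z)\le\Lambda_{k(n)}\,d(x,z)$; since the $n_k$ strictly increase, $k(n)\to\infty$ as $n\to\infty$, so the hypothesis $\Lambda_k\to0$ forces $d(T^{n}x,z)\to0$. For the parenthetical equivalence I would pass to logarithms: because each $\lambda_i\in(0,1]$ gives $-\ln\lambda_i\ge0$, the partial sums $-\ln\Lambda_k=\sum_{i=1}^{k}(-\ln\lambda_i)$ are nondecreasing, whence $\Lambda_k\to0\iff-\ln\Lambda_k\to+\infty\iff\sum_i(-\ln\lambda_i)=\infty$, while $\prod_i\lambda_i:=\lim_k\Lambda_k=0$ is simply a restatement of $\Lambda_k\to0$.

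I do not expect a genuine obstacle, since the variable-factor bookkeeping through $\Lambda_k$ runs exactly parallel to the geometric case. The only points demanding care are confirming that Part (1) remains valid across the pre-strict regime $k<k_\ast$ (where it collapses to nonexpansiveness) and that the interpolation index $k(n)$ truly diverges, both immediate from the monotonicity of $(\Lambda_k)$ and the strict monotonicity of $(n_k)$. The logarithmic equivalence is the sole step using anything beyond this formal parallel, and it is a routine monotone-series computation.
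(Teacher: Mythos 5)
Your proposal is correct and takes essentially the same route as the paper: Part (1) from $T^{\,n_k}z=z$ together with the VLC Lipschitz bound, Part (2) by interpolating between consecutive events via nonexpansiveness, and Part (3) by rerunning the argument of Corollary~\ref{cor:rate} with $\Lambda_{m+1}$ in place of $\lambda^{m+1}$. The only cosmetic difference is that you prove the product--sum equivalence inline, whereas the paper delegates it to Lemma~\ref{lem:prod-sum}.
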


\begin{proof}
(1) As in Theorem~\ref{thm:fixedpoint}(3), using $\Lip(T^{\,n_k})\le \Lambda_k$ and $T^{\,n_k}z=z$.
(2) If $n_k\le n< n_{k+1}$, then by nonexpansiveness $d(T^n x,z)\le d(T^{\,n_k}x,z)\le \Lambda_k d(x,z)\to 0$.
(3) As in Corollary~\ref{cor:rate}, take $m=\lfloor (n-n_1)/M\rfloor$ so that $n_{m+1}\le n$, and use (1).
\end{proof}

\begin{lemma}[Product--sum equivalence]\label{lem:prod-sum}
Let $(\lambda_k)_{k\ge1}\subset (0,1]$ and define $\Lambda_k:=\prod_{i=1}^k\lambda_i$. Then $\Lambda_k\to 0$ if and only if $\sum_{k=1}^\infty -\ln\lambda_k=\infty$.
\end{lemma}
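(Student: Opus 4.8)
The plan is to take logarithms and convert the statement about the infinite product $\Lambda_k$ into an equivalent statement about a monotone series with nonnegative terms, where the claimed equivalence becomes almost immediate. First I would record two structural facts that follow directly from the hypothesis $\lambda_i \in (0,1]$: each partial product $\Lambda_k$ is strictly positive (so its logarithm is defined) and the numbers $a_i := -\ln\lambda_i$ are all nonnegative. Writing $S_k := \sum_{i=1}^k a_i$ for the $k$-th partial sum, the identity $\ln \Lambda_k = \sum_{i=1}^k \ln\lambda_i = -S_k$ ties the two objects together and is the engine of the whole argument.

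Next I would exploit monotonicity on both sides. The partial sums $S_k$ are nondecreasing (being partial sums of nonnegative terms), so $S_k$ either converges to a finite limit or increases to $+\infty$; by definition the latter is exactly the assertion $\sum_k -\ln\lambda_k = \infty$. Correspondingly, $\Lambda_k$ is nonincreasing and bounded below by $0$, hence converges to some $L \in [0,1]$. The goal is then to show $L = 0 \iff S_k \to +\infty$, which I would obtain through the relation $\Lambda_k = e^{-S_k}$. For the forward direction, if $S_k \to +\infty$ then $\Lambda_k = e^{-S_k} \to 0$ because $e^{-t}\to 0$ as $t\to+\infty$, giving $L = 0$. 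For the reverse direction, if $S_k$ stays bounded, say $S_k \to S < \infty$, then $\Lambda_k = e^{-S_k} \to e^{-S} > 0$, so $L > 0$ and $\Lambda_k \not\to 0$. Taking contrapositives yields both implications of the stated equivalence.

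Honestly there is no serious obstacle here; the result is a packaging of the standard log-transform between products and sums, and the only points requiring care are bookkeeping ones. The first is ensuring the logarithm is legitimately applied---this is why positivity of $\Lambda_k$ (from $\lambda_i > 0$) must be noted explicitly, and why the degenerate endpoint $\lambda_i = 1$, which contributes $a_i = 0$, is harmless. The second is the limit interchange between $S_k$ and $\Lambda_k$ at infinity, which I would justify by the elementary limits $e^{-t}\to 0$ as $t\to+\infty$ and $\ln t \to -\infty$ as $t\to 0^+$; equivalently one can argue directly that $L = 0 \iff \lim_k \ln \Lambda_k = -\infty \iff S_k\to+\infty$, passing the limit through $\ln$ using its continuity on $(0,\infty)$. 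Either route completes the proof in a few lines.
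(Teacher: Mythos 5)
Your proposal is correct and follows essentially the same route as the paper's proof: both set $S_k := -\sum_{i=1}^k \ln\lambda_i$, use nonnegativity of the terms to get monotonicity of $(S_k)$, and conclude via the identity $\Lambda_k = e^{-S_k}$ that $\Lambda_k \to 0$ if and only if $S_k \to \infty$. Your version simply spells out the elementary limit facts and the dichotomy (bounded vs.\ divergent partial sums) that the paper leaves implicit.
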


\begin{proof}
Set $S_n:=-\sum_{k=1}^n \ln\lambda_k\in[0,\infty]$. Then $\Lambda_n=e^{-S_n}$. Since each $-\ln\lambda_k\ge 0$, the sequence $(S_n)$ is nondecreasing. Hence $\Lambda_n\to 0$ iff $S_n\to\infty$, i.e.\ iff $\sum_{k=1}^\infty -\ln\lambda_k=\infty$.
\end{proof}

\begin{remark}[Borderline and oscillatory factors]\label{rem:borderline}
The condition $\Lambda_k\to 0$ is sufficient for convergence but not necessary in general. If $\Lambda_k$ does not tend to $0$ (e.g.\ it oscillates with $\inf_k \Lambda_k>0$), the eventwise bound
$d(T^{\,n_k}x,z)\le \Lambda_k d(x,z)$ alone cannot force $d(T^{\,n_k}x,z)\to 0$. For instance, taking $\lambda_k=1-\frac{1}{k^2}$ yields $\sum -\ln\lambda_k<\infty$ and $\Lambda_k\downarrow \Lambda_\ast>0$; the theory here gives no convergence guarantee (though a specific map may still converge for other reasons). Conversely, sparse but sufficiently strong events with $\sum -\ln\lambda_k=\infty$ ensure $\Lambda_k\to 0$ and hence convergence.
\end{remark}

\section{Conclusion and outlook}
We introduced logically contractive mappings as a versatile generalization of strict contractions, and established that they preserve the essential benefits of Banach's theorem: existence and uniqueness of a fixed point and global convergence of iterates with geometric-type rates. This framework encompasses cases (like Example~\ref{ex:T-square} and Example~\ref{ex:linfty}) where a map is not a contraction yet converges in finitely many steps, as well as more subtle cases captured by an infinite sequence of intermittent contractions.

Future directions include: extending analysis to broader settings (e.g.\ uniformly convex Banach spaces and stochastic schedules), enriching the comparative landscape with further explicit examples, and optimizing per-iterate rates from finer statistics of event distributions beyond worst-case gaps.

\end{document}